\documentclass[hidelinks]{amsart}
\usepackage{a4wide}
\usepackage{amsmath}
\usepackage{stackrel}
\usepackage{amssymb}
\usepackage{graphicx}
\usepackage[numbers]{natbib}
\usepackage[english]{babel}
\usepackage[utf8]{inputenc}
\usepackage[T1]{fontenc}
\usepackage{enumitem}
\usepackage{multicol}
\usepackage{mathtools}
\usepackage{multirow}
\usepackage{array}
\usepackage{booktabs}
\usepackage{tikz}
\usetikzlibrary{arrows}
\usetikzlibrary{shapes}
\usetikzlibrary{positioning}
\usetikzlibrary{cd}
\usetikzlibrary{babel}
\usepackage[toc,page]{appendix}
\usepackage[all]{xy}
\usepackage{float}
\usepackage{centernot}
\usepackage{srcltx}
\usepackage{hyperref}
\usepackage[capitalise]{cleveref}



\newtheorem{theorem}{Theorem}
\newtheorem{lemma}[theorem]{Lemma}

\theoremstyle{definition}

\newtheorem{remark}[theorem]{Remark}
\newtheorem*{theorem*}{Theorem}
\newtheorem*{lemma*}{Lemma}
\newtheorem*{proposition*}{Proposition}
\newtheorem*{corollary*}{Corollary}
\newtheorem*{definition*}{Definition}
\newtheorem*{remark*}{Remark}

\parskip=3pt

\makeatletter
\renewcommand{\@biblabel}[1]{[#1]\hfill}
\makeatother

\setlist[itemize]{label=$\diamond$, leftmargin=0cm, itemindent=0.8cm}
\setlist[enumerate]{leftmargin=0cm, itemindent=1cm}

\pdfoutput=1



\begin{document}

\title{On the additivity of the little cubes operads}

\author{Miguel Barata}
\address{Mathematisch Instituut,
         Universiteit Utrecht,
         Postbus 80010,
         Utrecht, 3508 TA, The Netherlands}
\email{m.lourencohenriquesbarata@uu.nl}

\author{Ieke Moerdijk}
\address{Mathematisch Instituut,
         Universiteit Utrecht,
         Postbus 80010,
         Utrecht, 3508 TA, The Netherlands}
\email{i.moerdijk@uu.nl}

\begin{abstract}

   We give a new proof of Dunn's additivity for the little $n$-cubes operads $C_n$, which has the advantage of being considerably shorter and simpler than the proofs in \cite{Dunn, Brinkmeier}. At the end we remark on how our proof can be adjusted to the case of the tensor product of a finite number of factors.
\end{abstract}

\maketitle

\section{Introduction}

The purpose of this note is to provide a simpler proof of the additivity property of the little $n$-cubes operads $C_n$, usually known as \textit{Dunn additivity}, precisely stated in Theorem \ref{add} below. Before stating it, recall that a morphism $f\colon \mathbf{P} \to \mathbf{Q}$ of (uncoloured) topological operads is a \textit{weak equivalence} if the map on operations $f\colon \mathbf{P}(j) \to \mathbf{Q}(j)$ is a weak homotopy equivalence for all $j \geq 0$ (Dunn called these local $\Sigma$-equivalences). We will write $\mathbf{P} \simeq \mathbf{Q}$ if there exists such a  weak equivalence, or a zig-zag of such, between $\mathbf{P}$ and $\mathbf{Q}$. 

\begin{theorem}
$C_n \otimes C_m \simeq C_{n+m}$ for $n, m \geq 1$, where $\otimes$ denotes the Boardman--Vogt tensor product of topological operads.
\label{add}
\end{theorem}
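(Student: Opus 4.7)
The plan is to construct a canonical operad map $\phi\colon C_n \otimes C_m \to C_{n+m}$ and show it is a local $\Sigma$-equivalence, reducing the hard work by induction on $m$ to the base case $m = 1$.

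\emph{Construction of $\phi$.} By the universal property of the Boardman--Vogt tensor product, producing $\phi$ amounts to giving operad maps $\alpha\colon C_n \to C_{n+m}$ and $\beta\colon C_m \to C_{n+m}$ whose images satisfy the interchange law. I set $\alpha(c) = c \times I^m$ and $\beta(d) = I^n \times d$, where $I = [0,1]$. For $c \in C_n(j)$ and $d \in C_m(k)$, both iterated compositions $\alpha(c)(\beta(d), \ldots, \beta(d))$ and $\beta(d)(\alpha(c), \ldots, \alpha(c))$ yield the family of product cubes $\{c_i \times d_l\}_{i,l}$, agreeing up to the canonical shuffle on the pairs $(i,l)$, which is exactly the required interchange relation.

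\emph{Inductive reduction.} Using associativity of $\otimes$ together with its compatibility with local $\Sigma$-equivalences on $\Sigma$-cofibrant operads (a class containing each $C_k$), one obtains $C_n \otimes C_m \simeq (C_n \otimes C_{m-1}) \otimes C_1 \simeq C_{n+m-1} \otimes C_1$, reducing the whole theorem to the case $m = 1$.

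\emph{Base case and main obstacle.} For $m = 1$, the strategy is to match stratifications on the $j$-ary operations on both sides. On the target, stratify $C_{n+1}(j)$ by the partial order on cubes induced by disjointness of their projections to the last coordinate; on the source, stratify $(C_n \otimes C_1)(j)$ by the trees appearing in a tree-indexed presentation of the tensor product, where each tree records how the $C_1$-ordering data and the residual $C_n$-data interleave. The map $\phi$ preserves strata, and a stratum-by-stratum equivariant equivalence (via contractibility of the relevant fibres) should assemble into the desired global equivalence. The main obstacle is precisely this matching: one must carry out the identification rigorously, respecting both the $\Sigma_j$-actions and the operadic composition, and handle the gluing where different tree types meet. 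A suitable combinatorial model of the tensor product --- for instance, a dendroidal presentation separating the last-coordinate ordering from the residual $n$-cube data --- is likely essential, and I expect this is where the paper's simplification over \cite{Dunn, Brinkmeier} resides.
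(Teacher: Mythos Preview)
Your inductive reduction contains a genuine gap. You assert that the Boardman--Vogt tensor product is ``compatible with local $\Sigma$-equivalences on $\Sigma$-cofibrant operads,'' and use this to pass from $C_n \otimes C_{m-1} \simeq C_{n+m-1}$ to $(C_n \otimes C_{m-1}) \otimes C_1 \simeq C_{n+m-1} \otimes C_1$. This compatibility is false. Both $C_1$ and $\mathbf{Ass}$ are $\Sigma$-cofibrant (their arity-$j$ spaces carry free $\Sigma_j$-actions) and $C_1 \simeq \mathbf{Ass}$, yet $\mathbf{Ass} \otimes \mathbf{Ass} = \mathbf{Comm}$ while $C_1 \otimes C_1 \simeq C_2 \not\simeq \mathbf{Comm}$. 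So tensoring, even with a $\Sigma$-cofibrant operad, does not preserve local $\Sigma$-equivalences, and your induction does not get off the ground. This is precisely the obstruction the paper flags in its introduction and again in Remark~\ref{multi}: additivity for general $n,m$ does \emph{not} follow from the case $C_k \otimes C_1 \simeq C_{k+1}$ by iteration.

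The paper therefore avoids any such reduction and treats all $n,m$ at once. Its strategy is quite different from your stratification-by-trees idea: it factors $\mu$ through the image suboperad $C_n \vert C_m \subseteq C_{n+m}$, quotes the (easy) fact that the inclusions $D_{n+m} \hookrightarrow C_n\vert C_m$ and $D_{n+m} \hookrightarrow C_{n+m}$ are local $\Sigma$-equivalences, and then proves that the surjection $\mu'\colon C_n \otimes C_m \to C_n\vert C_m$ is an honest \emph{isomorphism} of topological operads. The substance is in injectivity of $\mu'$ on each arity, handled by an induction on arity using a normal form for operations (Lemma~\ref{gen_pos}) together with a common-refinement trick for the ``outer'' $C_n$- or $C_m$-operation; continuity of the inverse is obtained separately by a short properness argument (Lemma~\ref{proper}). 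No tree model of the tensor product, no stratum matching, and no homotopical invariance of $\otimes$ are invoked. Even setting aside the flawed reduction, your base-case sketch remains at the level of a plan; the paper's argument is complete and, as Remark~\ref{multi} indicates, extends directly to any finite tensor product $C_{k_1}\otimes\cdots\otimes C_{k_n}$ without ever needing $\otimes$ to preserve equivalences.
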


The first version of this statement was proven by Dunn in \cite{Dunn}, where he showed that $C_1^{\otimes n} \simeq C_n$. However, it is important to remark that Theorem \ref{add} is not an immediate consequence of Dunn's result since in general the Boardman--Vogt tensor product need not preserve weak equivalences as is well known: for instance, for the associative operad $\mathbf{Ass}$ we have that $\mathbf{Ass} \otimes \mathbf{Ass}$ is the commutative operad $\mathbf{Comm}$; however $C_1 \simeq \mathbf{Ass}$ and $C_1 \otimes C_1 \simeq C_2$ by Dunn's additivity, but $C_2 \not\simeq \mathbf{Comm}$. A proof of Theorem \ref{add} is given in \cite{Brinkmeier} following the general idea of Dunn's proof (see also \cref{history} for some comments on this proof). In this text we aim to give a different proof of the additivity statement, in which we rely on the description of the operations of the operad $C_n \otimes C_m$ as being generated, under composition, by elementary operations of the form $p \otimes 1$ and $1 \otimes q$, where $p \in C_n(a), q \in C_m(b)$. In Remark \ref{multi} we comment on how our methods can be generalized to the case of a finite tensor product $C_{k_1} \otimes \cdots \otimes C_{k_n}$.

\begin{remark}
    After this paper appeared as a preprint on arXiv with number 2205.12875, our methods have been used by Ben Szczesny in \citep[Ch. 4]{Szczesny} to prove Dunn additivity in an equivariant setting, and by work (to appear) of Victor Carmona and Anja Scraka to prove a version of additivity for the Swiss-cheese operad.
    \label{other_work}
\end{remark}

\noindent \textbf{Acknowledgements.} We are grateful to the anonymous referee for useful comments on the paper. The first author would like to thank Victor Carmona for talking about his work and mentioning the PhD thesis \cite{Szczesny} that we referred to in \cref{other_work}. The first author was supported by the Dutch Research Council (NWO) through the grant 613.009.147.

\section{The proof of Theorem 1}

To begin with, let us describe how one can construct an operad map $\mu\colon C_n \otimes C_m \to C_{n+m}$. Consider the morphisms
$$\mu_1\colon C_n \rightarrow C_{n+m} \hspace{1em} \mathrm{and} \hspace{1em} \mu_2\colon C_m \rightarrow C_{n+m}$$
induced by the inclusions (of spaces) of the first $n$ coordinates $\mathbb{R}^n \to \mathbb{R}^{n+m}$ and of the last $m$ coordinates $\mathbb{R}^m \to \mathbb{R}^{n+m}$ respectively. These are easily seen to give rise to an operad map on the Boardman--Vogt tensor product $C_n \otimes C_m$,
$$ \mu\colon C_n \otimes C_m \longrightarrow C_{n+m},$$
whose image is a suboperad $C_n \vert C_m \subseteq C_{n+m}$ which is defined in \citep[Sec. 8]{Brinkmeier}. It follows that $\mu$ factors through the operad inclusion $C_n \vert C_m \subseteq C_{n+m}$ as an operad morphism $\mu'\colon C_n \otimes C_m \to C_n \vert C_m$ which is surjective on operations.

The next step in both of the proofs by Dunn and Brinkmeier is to compare $C_n \vert C_m$ with the operad of decomposable little $(n+m)$-cubes $D_{n+m} \subseteq C_{n+m}$, see \citep[Def. 2.1.]{Dunn} for a definition of $D_k$ and \cref{history} below for a small correction to this definition. They begin by observing the following concerning $D_k$:
\begin{enumerate}
    \item[(i)] The inclusion $D_k \hookrightarrow C_k$ is a  weak equivalence.
    \item[(ii)] Letting $D_n \vert D_m \subseteq C_{n+m}$ denote the operad given by the image of $\mu$ when restricted to $D_n \otimes D_m$, the inclusion $D_n \vert D_m \hookrightarrow C_n \vert C_m$ is a  weak equivalence.
    \item[(iii)] The operad $D_n \vert D_m$ coincides with $D_{n+m}$.
\end{enumerate}

We'll not repeat the proofs of these observations as these are rather elementary: the required homotopies are simply obtained by radially contracting each of the cubes $c \in C_{n+m}(j)$ to their centers and observing that, after some moment in this contraction, $c$ will be an element of $D_{n+m}(j)$. Putting these results together, it follows after examining the commutative diagram
\begin{equation}
\begin{tikzcd}
C_n \otimes C_m \arrow[rr, "\mu"] \arrow[d, "\mu'"'] &                                                                    & C_{n+m}                            \\
C_n|C_m                                              & D_n|D_m \arrow[l, "\simeq"', hook'] \arrow[r, -, double equal sign distance] & D_{n+m} \arrow[u, "\simeq"', hook]
\end{tikzcd}
\label{main_diag}
\end{equation}
that to prove Theorem \ref{add} it is sufficient to show that $\mu'\colon C_n \otimes C_m \to C_n \vert C_m$ is an isomorphism of topological operads. This is the difficult step in both Dunn's and Brinkmeier's proofs and so our purpose is to give a shorter and simpler explanation of why this map is indeed an isomorphism.

We start with the following lemma, which will allow us to split up the inductive step of the proof of the injective statement in Lemma \ref{homeo} into more tractable cases.

\begin{lemma}
Let $n,m \in \mathbb{N}$ and consider an operation $c $ in $C_n \otimes C_m$ of arity $j > 1$. Then $c$ can be written as
\begin{equation}
 c = (p \otimes 1) \circ (t_1, \ldots, t_a) \hspace{1em} or \hspace{1em} c=(1 \otimes q) \circ (s_1, \ldots, s_b),
 \label{norm}
 \end{equation}
where $p \in C_n(a), q \in C_m(b)$ with $a,b > 1$, and $t_i, s_j$ are operations in $C_n \otimes C_m$ of positive arity.
\label{gen_pos}
\end{lemma}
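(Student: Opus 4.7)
My plan is to use the standard presentation of the Boardman--Vogt tensor product $C_n \otimes C_m$ as the operad generated by the symbols $p \otimes 1$ (for $p \in C_n$) and $1 \otimes q$ (for $q \in C_m$), subject to the operadic composition relations inside each of $C_n, C_m$ together with the interchange relation
\[
(p \otimes 1) \circ (1 \otimes q, \ldots, 1 \otimes q) \;=\; \tau \cdot (1 \otimes q) \circ (p \otimes 1, \ldots, p \otimes 1),
\]
for $p \in C_n(a), q \in C_m(b)$, where $\tau$ is the shuffle permutation of $ab$ inputs and there are $a$, respectively $b$, copies of the second factor on each side. Any $c$ thus admits an expression as a finite composite of such generators, and I will prove the lemma by induction on the number of generators in such an expression. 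The base case is immediate: if $c$ consists of a single generator, then by hypothesis it has arity $j>1$ and is already in one of the two desired forms with the $t_i$ or $s_j$ taken to be identities.

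For the inductive step, I fix an expression $c = g \circ (c_1, \ldots, c_a)$ with $g$ the outermost generator; suppose $g = p \otimes 1$ with $p \in C_n(a)$, the case $g = 1 \otimes q$ being entirely symmetric. The first move is to absorb any nullary $c_i$ into $g$: since $C_n(0)$ and $C_m(0)$ are singletons, interchange applied with a nullary operation forces $*_n \otimes 1 = 1 \otimes *_m$, so $(C_n \otimes C_m)(0)$ is itself a singleton; thus any nullary $c_i$ equals $*_n \otimes 1$ and can be absorbed into $p$ via the operadic composition $p \circ_i *_n \in C_n(a-1)$. After this reduction $c = (p' \otimes 1) \circ (c'_1, \ldots, c'_{a'})$ with every $c'_k$ of positive arity and $p' \in C_n(a')$. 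If $a' \geq 2$, then $c$ is already in the first form of~(\ref{norm}). Otherwise $a' = 1$, so $c = (p' \otimes 1) \circ c'_1$ with $p' \in C_n(1)$ and $c'_1$ of arity $j>1$ and strictly fewer generators than $c$, to which I apply the inductive hypothesis.

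If the induction returns $c'_1 = (p'' \otimes 1) \circ (t_1, \ldots, t_{a''})$, then operadic associativity in $C_n$ gives $c = ((p' \circ p'') \otimes 1) \circ (t_1, \ldots, t_{a''})$ with $p' \circ p'' \in C_n(a'')$ and $a''>1$. If instead it returns $c'_1 = (1 \otimes q'') \circ (s_1, \ldots, s_{b''})$, then I use the interchange relation -- whose shuffle is trivial since $p' \otimes 1$ is unary -- to commute $p' \otimes 1$ past $1 \otimes q''$:
\[
(p' \otimes 1) \circ (1 \otimes q'') = (1 \otimes q'') \circ (p' \otimes 1, \ldots, p' \otimes 1),
\]
yielding $c = (1 \otimes q'') \circ ((p' \otimes 1) \circ s_1, \ldots, (p' \otimes 1) \circ s_{b''})$ in the second form, and each $(p' \otimes 1) \circ s_k$ retains the positive arity of $s_k$ since $p' \otimes 1$ is unary. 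The hard part will be keeping track of nullary operations so that the $t_i$ and $s_j$ in the final expression have positive arity, and this is handled uniformly by the absorption step made possible by the triviality of $(C_n \otimes C_m)(0)$.
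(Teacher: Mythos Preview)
Your proof is correct and follows essentially the same route as the paper's: both argue by induction on the number of elementary generators in an expression for $c$, absorb nullary inputs into the outer $p$ using that $(C_n\otimes C_m)(0)$ is a point, and in the unary case combine the outer $p'\otimes 1$ with what induction returns via either the relation $(p'\otimes 1)\circ(p''\otimes 1)=(p'\circ p'')\otimes 1$ or the interchange law. The only differences are organizational---the paper treats the nullary absorption and the arity constraint as two separate reductions, and phrases the induction in terms of the minimal length $\ell(c)$ rather than a fixed expression---but the substance is the same.
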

\begin{proof}
Since the operations in $C_n \otimes C_m$ are generated under composition by elements of the form $p \otimes 1$ and $1 \otimes q$, we can certainly write $c$ in one of the forms shown in \eqref{norm}. Thus, it remains to show that our constraints on the arities of $p, q, t_i$ and $s_j$ can be realized. In what follows we will write $\ast$ for the nullary operation of the operads $C_n, C_m$ and $C_n \otimes C_m$; it should always be clear from the context in which operad we are considering $\ast$ so no confusion will arise from this notation.

If $c = (p \otimes 1) \circ (t_1, \ldots, t_a)$ but $t_i = \ast$ holds for some index $i$, then letting $p'$ be the operation $p \circ_i \ast \in C_n(a-1)$ we see that
$$ c= (p' \otimes 1) \circ \left( t_1, \ldots, \hat{t}_i, \ldots, t_a \right)$$
where $\hat{t}_i$ denotes the omission of the $i^{th}$ input $t_i$. Hence, we can assume that none of the $t_i$'s is the nullary operation. Moreover, a simple adaptation of this argument also shows that we can assume that $s_i \neq \ast$ in expression \eqref{norm}.

As for the restriction $a> 1$, we induct on the \textit{length} of $c$, denoted $\ell(c)$, which we define to be the infimum of the number of elementary operations of the the form $p \otimes 1$ and $1 \otimes q$ needed to write $c$ using the composition of such operations in the operad. If $\ell(c)=1$ then $c$ is either $p \otimes 1$ or $1 \otimes q$, where $p$ and $q$ have arity $j > 1$, and so we have nothing to prove. For the induction step, the length of $c$ must be realized by an expression of the form $c = (p \otimes 1) \circ t_1$ or $c=(1 \otimes q) \circ s_1$, where $p, q$ have arity 1 (otherwise we have nothing to prove) and $\ell(t_1), \ell(s_1) < \ell(c)$. If the former holds then, by our induction hypothesis, we know that $t_1$ must be of the form
$$ t_1 = (p' \otimes 1) \circ (t_1', \ldots, t_k') \hspace{1em} \mathrm{or} \hspace{1em} t_1 = (1 \otimes q') \circ (s'_1, \ldots, s'_{\ell}),$$
where $p', q'$ have at least arity 2. For the leftmost case  we can use the relation $(p \otimes 1) \circ (p' \otimes 1) = (p \circ p') \otimes 1$ to conclude the intended; for the other case we use the interchange relation of the tensor product instead. The argument using the expression $c=(1 \otimes q) \circ s_1$ is completely parallel to this one. This finishes our induction.
\end{proof}

\begin{lemma}
Let $j \geq 1$. Then $\mu'\colon (C_n \otimes C_m)(j) \to (C_n \vert C_m)(j)$ is a continuous bijection.
\label{homeo}
\end{lemma}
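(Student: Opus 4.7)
Continuity and surjectivity of $\mu'$ onto $(C_n \vert C_m)(j)$ are both immediate from the construction, since $\mu'$ is the corestriction of the continuous operad morphism $\mu$ to its image and $C_n \vert C_m$ is defined as this image. The substance of the lemma is therefore injectivity, which the plan is to establish by induction on the arity $j$.

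For $j=1$, the interchange relation of the Boardman-Vogt tensor product identifies $(C_n \otimes C_m)(1)$ with $C_n(1) \times C_m(1)$, and the image of a pair $(p,q)$ under $\mu'$ is the product cube $p \times q \subseteq [0,1]^{n+m}$; since every little $(n+m)$-cube arises uniquely in this product form, $\mu'$ is bijective at arity one.

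For the inductive step $j \geq 2$, suppose $c, c' \in (C_n \otimes C_m)(j)$ satisfy $\mu'(c) = \mu'(c')$, and apply Lemma \ref{gen_pos} to each. This produces four subcases. The two in which $c$ and $c'$ are presented in opposite-sided forms (one using $(p \otimes 1) \circ (\ldots)$ and the other using $(1 \otimes q) \circ (\ldots)$) can be reduced to the same-sided case by invoking the interchange relation of the Boardman-Vogt tensor product to rewrite one expression in the form of the other. Hence it suffices to treat the case $c = (p \otimes 1) \circ (t_1, \ldots, t_a)$ and $c' = (p' \otimes 1) \circ (t'_1, \ldots, t'_{a'})$ with $a, a' \geq 2$. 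Here the plan is to extract from the configuration $C := \mu'(c) \in C_{n+m}(j)$ a canonical ``finest $n$-separation'': the partition of the $j$ cubes of $C$ obtained by joining two cubes whenever their projections onto the first $n$ coordinates have overlapping interiors. Because the cubes of $p$ have disjoint interiors, the partition induced by $p$ is a coarsening of this canonical one, and the same is true for $p'$. The canonical partition should itself be realisable by an operation $\tilde p \in C_n(\tilde a)$, and using the operad identities $(p \otimes 1) \circ_i (\tilde p_i \otimes 1) = (p \circ_i \tilde p_i) \otimes 1$ in $C_n \otimes C_m$ one can rewrite both $c$ and $c'$ in the form $(\tilde p \otimes 1) \circ (\tilde t_1, \ldots, \tilde t_{\tilde a})$ and $(\tilde p \otimes 1) \circ (\tilde t'_1, \ldots, \tilde t'_{\tilde a})$ respectively. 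Since $\tilde a \geq 2$, each inner operation has arity strictly less than $j$; as $\mu'(\tilde t_i) = \mu'(\tilde t'_i)$ for every $i$, the inductive hypothesis gives $\tilde t_i = \tilde t'_i$, and hence $c = c'$.

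The main obstacle will be making precise the geometric construction of $\tilde p$ from $C$ and verifying that $p$ and $p'$ can genuinely be rewritten as compositions through $\tilde p$ in $C_n$ in a way compatible with the inner operations $t_i, t'_i$. This should hinge on the disjoint-interiors condition on the cubes appearing in any element of $C_n$, together with the freedom to rescale a cube of $C_n$ so that it coincides with the smallest enclosing rectangle of a cluster of first-$n$-coordinate projections.
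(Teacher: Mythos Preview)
Your overall inductive strategy matches the paper's, but there are two genuine gaps. The first concerns your canonical ``finest $n$-separation'' $\tilde p$: for $n \geq 2$ the connected components of the projection-overlap graph need not admit pairwise-disjoint enclosing boxes, so $\tilde p$ may fail to exist as an element of $C_n$. Concretely, take $n=2$ and projections $[0,1]\times[0,2]$, $[0,2]\times[0,1]$, $[1.5,1.8]\times[1.5,1.8]$, and $[5,6]\times[5,6]$ (rescaled into $(0,1)^2$, with the $m$-coordinates chosen to make the ambient cubes disjoint). The first two projections overlap, the others do not, so the components are $\{1,2\}$, $\{3\}$, $\{4\}$; but the smallest rectangle around $\{1,2\}$ is $[0,2]\times[0,2]$, which contains the projection of cube $3$. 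Yet a valid $p$ with $a=2$ does exist (grouping $\{1,2,3\}$ versus $\{4\}$), so this configuration can occur. The paper avoids this by not seeking an intrinsic finest partition at all: it takes $p \wedge \overline{p}$ to be the configuration whose cubes are the pairwise intersections of cubes of $p$ with cubes of $\overline{p}$ (discarding empty ones), which are automatically disjoint rectangles and always satisfy $p \wedge \overline{p} = p \circ (p_1,\ldots) = \overline{p} \circ (\overline{p}_1,\ldots)$ in $C_n$. It then builds an auxiliary $c_0 = ((p \wedge \overline{p}) \otimes 1) \circ (u_1,\ldots,u_\ell)$ directly from the geometry of $\mu'(c_1)$, regroups $c_0$ through $p$ (respectively $\overline{p}$), and applies the already-proved ``same outer operation'' subcase to get $c_0=c_1$ and $c_0=c_2$.

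The second gap is in your treatment of the mixed case. Saying that interchange lets you rewrite $(p \otimes 1)\circ(t_1,\ldots,t_a)$ in the form $(1 \otimes q)\circ(\ldots)$ presupposes that each $t_i$ is already of the form $(1 \otimes q)\circ(v^i_1,\ldots,v^i_b)$ in $C_n \otimes C_m$, which is not given. The paper handles this by observing that from $\mu'(c_1)=\mu'(c_2)$ one can \emph{construct} an element $t_i'$ of that shape with $\mu'(t_i')=\mu'(t_i)$; since each $t_i$ has arity strictly less than $j$ (because $a \geq 2$ and all $t_k$ have positive arity), the induction hypothesis gives $t_i = t_i'$, and only then does interchange apply. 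Your sketch needs this step made explicit; ``invoking the interchange relation'' alone does not do the work.
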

\begin{proof}
Since $C_n \vert C_m$ is just the image of $\mu$ and thus $\mu'$ is surjective at the level of operations, we only have to check that $\mu'$ is also injective.

For $j=1$ notice that due to the interchange relation, any element of $(C_n \otimes C_m)(1)$ is of the form $(p \otimes 1) \circ (1 \otimes q)$ where $p \in C_n(1)$ and $q \in C_m(1)$. Moreover, it is clear that $\mu'( (p \otimes 1) \circ (1 \otimes q)) = \mu'( (p' \otimes 1) \circ (1 \otimes q'))$ can happen if and only if $p=p', q=q'$.

Suppose that $c_1, c_2 \in (C_n \otimes C_m)(j)$ satisfy $\mu'(c_1)=\mu'(c_2)$. By Lemma \ref{gen_pos}, it is enough to consider the following three scenarios:
\begin{enumerate}
    \item[(i)] We can write
    $$ c_1 = (p \otimes 1) \circ (t_1, \ldots, t_a) \hspace{1em} \mathrm{and} \hspace{1em} c_2=(\overline{p} \otimes 1) \circ (\overline{t_1}, \ldots, \overline{t_{\overline{a}}} ), $$
    where $p \in C_n(a), \overline{p} \in C_n(\overline{a})$ with $a, \overline{a} > 1$ and $t_i, \overline{t_j} \neq \ast$.
    \item[(ii)] We can write
    $$ c_1 = (1 \otimes q) \circ (s_1, \ldots, s_b) \hspace{1em} \mathrm{and} \hspace{1em} c_2=(1 \otimes \overline{q}) \circ (\overline{s_1}, \ldots, \overline{s_{\overline{b}}} ), $$
    where $q \in C_m(b), \overline{q} \in C_m(\overline{b})$ with $b, \overline{b} > 1$ and $s_i, \overline{s_j} \neq \ast$.
    \item[(iii)] We can write
    $$ c_1 = (p \otimes 1) \circ (t_1, \ldots, t_a) \hspace{1em} \mathrm{and} \hspace{1em} c_2=(1 \otimes q) \circ (s_1, \ldots, s_b ), $$
    where $p \in C_n(a), q \in C_m(b)$ with $a, b > 1$ and $t_i, s_j \neq \ast$.
\end{enumerate} 

Proceeding inductively on $j$, suppose first that (i) holds. If $p = \overline{p}$ then, by considering the equality
$$\mu'(p \otimes 1) \circ (\mu'(t_1), \ldots, \mu'(t_a)) = \mu'(p \otimes 1) \circ (\mu'(\overline{t}_1), \ldots, \mu'(\overline{t}_a))$$
together with the injectivity of the composition map $\mu'(p\otimes 1) \circ (-)$ in $C_{n+m}$, we conclude that $\mu'(t_i) = \mu'(\overline{t}_i)$ for all $i=1, \ldots, a$. As $t_i$ and $\overline{t}_i$ have arity strictly smaller than that of $c$ by our hypotheses in (i), the induction hypothesis gives us that $t_i = \overline{t}_i$ and thus $c_1 = c_2$. We remark that this argument also holds for scenario (ii) whenever $q=\overline{q}$.

If $p \neq \overline{p}$ then we claim that there exists an operation $p \wedge \overline{p}$ in $C_n$ satisfying the following two constraints:
\begin{enumerate}
    \item[(a)] There exist operations $p_1, \ldots, p_k$ and $\overline{p}_1, \ldots, \overline{p}_{\overline{k}}$ in $C_n$ such that
     $$
 p \wedge \overline{p}= p \circ (p_1, \ldots, p_k) = \overline{p}\circ (\overline{p}_1, \ldots, \overline{p}_{\overline{k}}).    
$$
    \item[(b)] There exists an operation $c_0=((p \wedge \overline{p}) \otimes 1) \circ (u_1, \ldots, u_\ell)$ in $C_n \otimes C_m$ satisfying $\mu'(c_0) = \mu'(c_1)= \mu'(c_2)$.
\end{enumerate}

To construct this, consider the configurations of cubes determined by $p$ and $\overline{p}$ in $]0,1[^n$. These cubes must intersect each other (otherwise it would contradict $\mu'(c_1)=\mu'(c_2)$) and so let $\alpha_1, \ldots, \alpha_k$ be an enumeration of the cubes resulting from these intersections. From this, we further discard all $\alpha_i$ such that $\mu'(\alpha_i \otimes 1) \cap \mu'(c_1) = \emptyset$. We set $p \wedge \overline{p}$ to be the configuration determined by these remaining cubes, arbitrarily enumerated. Our initial choice of $\alpha_i$'s ensures that (a) is satisfied, and the removal of the extra cubes makes (b) hold.

The figure below pictorially exmplifies how to construct $p \wedge \overline{p}$ when $n=2$: the red squares are elements of $p$, the blue ones represent $\overline{p}$ and the green cubes are the squares associated to the $t_i$'s and the $\overline{t_j}$'s. In our construction, the $\alpha_i$ are the yellow squares, but the only ones we include in $p \wedge \overline{p}$ are the ones containing green rectangles in their interior.

\vspace{-2em}

\begin{figure}[H]
\centerline{\includegraphics[scale=.300]{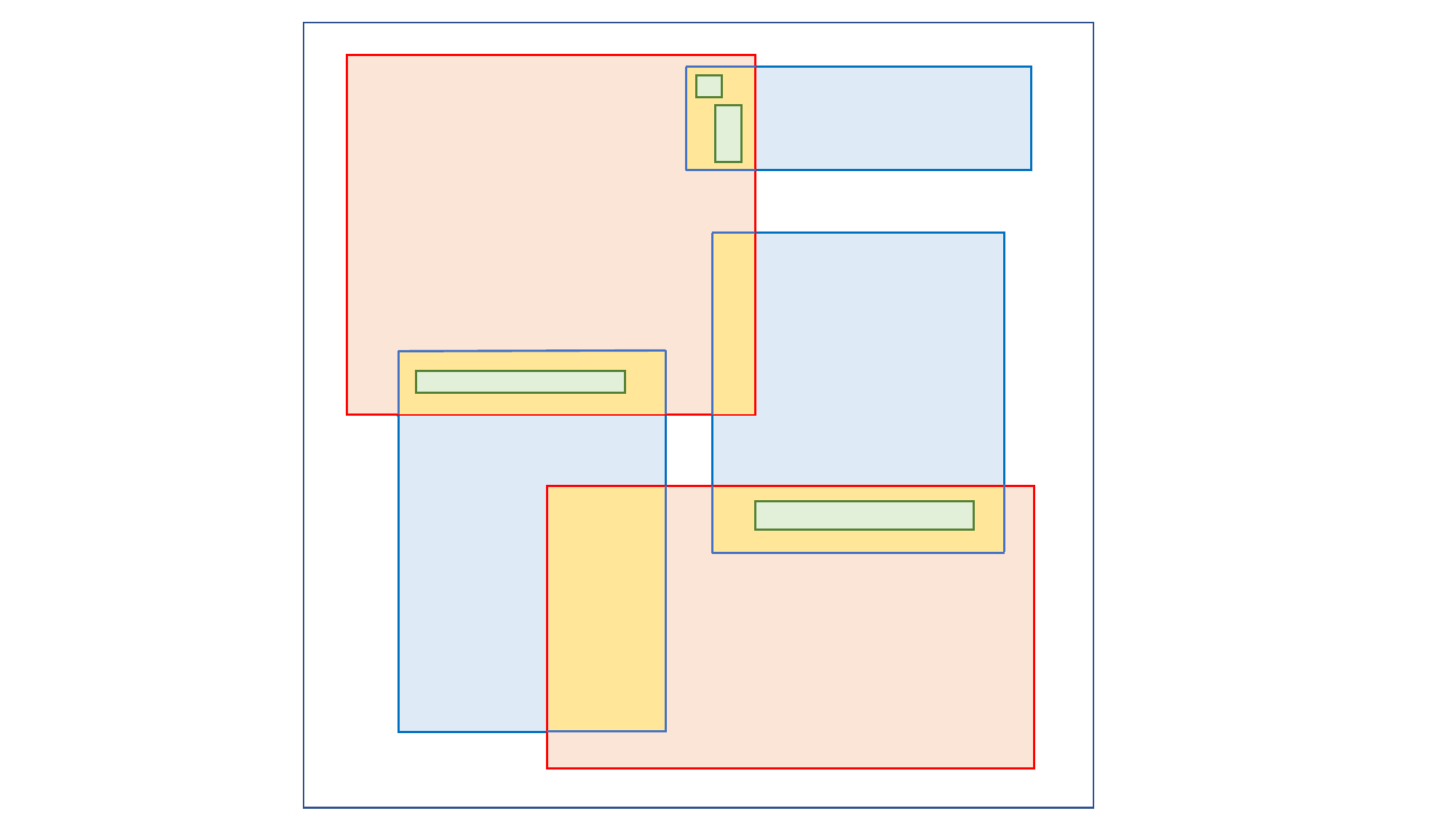}}
\end{figure}

\vspace{-1em}

Starting from the expression for $c_0$ in (b), condition (a) allows us to rewrite $c_0$ as below:
\begin{equation}
c_0 = \big(  (p \otimes 1) \circ (p_1 \otimes 1, \ldots, p_k \otimes 1) \big) \circ (u_1, \ldots, u_{\ell}) = (p \otimes 1) \circ (u'_1, \ldots, u'_a),
\label{adj}
\end{equation} 
and we claim that none of the $u_i'$ can be the nullary operation $\ast$. Indeed, suppose  $u'_i=\ast$ for some index $i$ and write $\beta_i$ for $i^{th}$ cube of $p$. Then $\mu'(\beta_i \otimes 1)$ doesn't intersect $\mu'(c_1)$ in $]0,1[^{n+m}$, which contradicts our initial assumption (i) that $t_i \neq \ast$.

Thus we can apply the special case of (i) which we discussed first to 
$$c_0 = (p \otimes 1) \circ (u'_1, \ldots, u'_a) \hspace{1em} \mathrm{and} \hspace{1em} c_1 = (p \otimes 1) \circ (t_1, \ldots, t_a),$$
 and conclude that $c_0=c_1$. If we instead argued using $\overline{p}$ and wrote $c_0$ as an expression of the form $(\overline{p} \otimes 1) \circ (v_1', \ldots, v_b')$ via property (a), then we would obtain $c_0=c_2$. Piecing all of this together, we have shown that $c_1=c_2$, finishing the proof for (i). The argument for case (ii) follows by symmetry.

If (iii) holds then we claim that each $t_i$ must be of the form $(1 \otimes q) \circ (v_1^i, \ldots, v_b^i)$. Indeed, using our initial condition $\mu'(c_1) = \mu'(c_2)$, we can find an element of this form - call it $t_i'$ for the time being - such that  $\mu'(t'_i) = \mu'(t_i)$ in $C_n \vert C_m$. Since the arity of $t_i$ is strictly less than that of $c_1$ (here we use that the $t_i$'s are not the nullary operation), we can apply the induction hypothesis to conclude that $t_i=t'_i$. 

The figure below exemplifies, for $n=m=1$, how one can find $t_1'$. Here $\mu'(c_1)$ is made up of the green rectangles. The configuration $p \otimes 1$ gives rise to the vertical strips $\mu'(p_i \otimes 1)$ in blue, and the red horizontal ones $\mu'(1 \otimes q_j)$ are defined via the operation $1 \otimes q$. Moreover, $\mu'(c_1)$ must be contained in the intersection of these rectangles, which are the yellow cubes. The element $\mu'(t_1)$, which corresponds, up to rescalling on the horizontal direction, to the green cubes inside $\mu'(p_1 \otimes 1)$, has a subdivision by the horizontal strips, which can then be used to construct our desired $t_1' = (1 \otimes q) \circ (v_1', v_2', v_3')$.

\begin{figure}[H]
\centerline{\includegraphics[scale=.300]{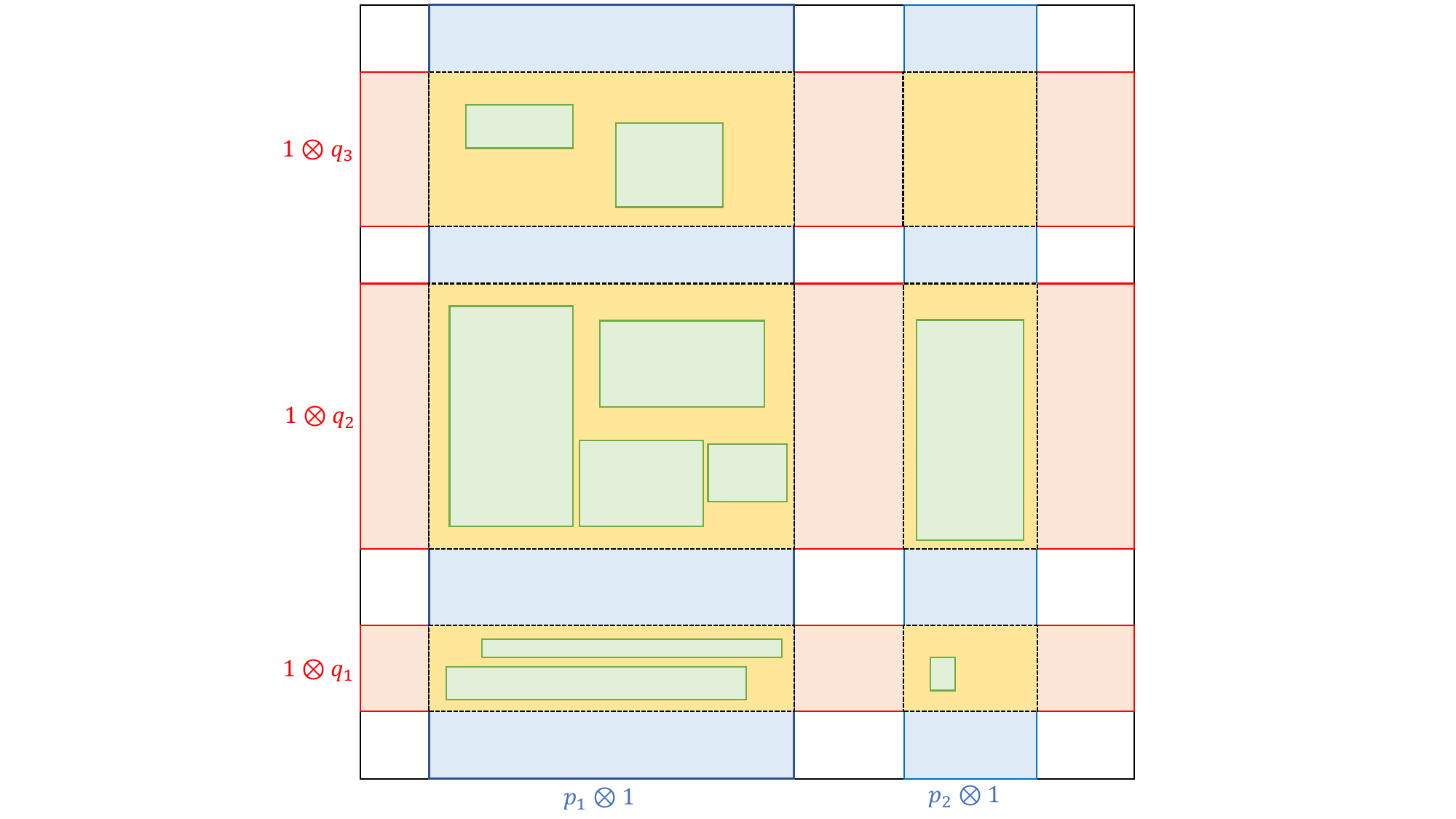}}
\end{figure}

\vspace{-1em}

Using the interchange relation in $C_n \otimes C_m$ we see that $c_1$ is of the form
\begin{align*}
     (p \otimes 1) \circ (t_1, \ldots, t_a) &= (p \otimes 1) \circ \left( (1 \otimes q) \circ (v_1^1, \ldots, v_b^1), \ldots, (1 \otimes q) \circ (v_1^a, \ldots, v_b^a) \right) \\
    &= \sigma_{a,b}^*  (1 \otimes q) \circ \left( (p \otimes 1) \circ (v_1^1, \ldots, v_1^a), \ldots, (p \otimes 1) \circ (v_b^1, \ldots, v_b^a ) \right)  \\
    &= (1 \otimes q) \circ (v_1, \ldots, v_b)
\end{align*}
where $\sigma_{a,b}$ is the permutation in $\Sigma_{ab}$ that appears in the interchange relation. In particular $v_i \neq \ast$ since otherwise the $i^{th}$ strip of $\mu'(c_1)$, as determined by the operation $\mu'(1 \otimes q)$, would have no cubes in its interior, which contradicts $s_i \neq \ast$. We are now in the position of applying case (ii) to conclude that $c_1=c_2$, which finishes the proof of injectivity.

\end{proof}

\begin{lemma}
Let $j \geq 2$. Then $\mu'\colon (C_n \otimes C_m)(j) \to (C_n \vert C_m)(j)$ is a proper map.
\label{proper}
\end{lemma}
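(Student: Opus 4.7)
I plan to proceed by induction on the arity $j$. For the base case $j = 1$, the interchange relation identifies $(C_n \otimes C_m)(1)$ with $C_n(1) \times C_m(1)$ via $(p,q) \mapsto (p \otimes 1) \circ (1 \otimes q)$, and under this identification $\mu'$ becomes the product-of-cubes map $(p,q) \mapsto p \times q$ into $(C_n \vert C_m)(1) \subseteq C_{n+m}(1)$, which is visibly a closed embedding and hence proper.

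For the inductive step, fix a compact $K \subseteq (C_n \vert C_m)(j)$; I verify that $(\mu')^{-1}(K)$ is compact in $(C_n \otimes C_m)(j)$ by extracting convergent subsequences. Given a sequence $(c_\nu)$ in $(\mu')^{-1}(K)$, after passing to a subsequence I may assume $\mu'(c_\nu) \to x$ in $K$. Applying Lemma \ref{gen_pos} to each $c_\nu$ and passing to a further subsequence, I can arrange that all $c_\nu$ are written in the same form of Lemma \ref{gen_pos}, say the first,
$$c_\nu = (p_\nu \otimes 1) \circ (t_1^\nu, \ldots, t_a^\nu),$$
with the top arity $a$ constant and chosen maximal among all such decompositions of $c_\nu$, and with the arities $j_i$ of the $t_i^\nu$ also constant (so $j_i < j$).

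The key geometric observation will be that the maximality of $a$ makes the system of $a$ non-overlapping strips in $(0,1)^{n+m}$ cut out by $\mu'(p_\nu \otimes 1)$ canonically determined by $\mu'(c_\nu)$: they should be the thinnest non-overlapping strips enclosing the cubes of $\mu'(c_\nu)$ according to the partition into groups of sizes $j_1, \ldots, j_a$. Since $\mu'(c_\nu) \to x$, these strips then vary continuously and $p_\nu$ converges to some $p \in C_n(a)$. For each $i$, rescaling the cubes of $\mu'(t_i^\nu)$ by the corresponding strip coordinates places $\mu'(t_i^\nu)$ in a compact subset $K_i \subseteq (C_n \vert C_m)(j_i)$, whose preimage under $\mu'$ is compact by the inductive hypothesis. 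Extracting convergent subsequences $t_i^\nu \to t_i$ then yields $c_\nu \to (p \otimes 1) \circ (t_1, \ldots, t_a)$ in $(C_n \otimes C_m)(j)$. The second form in Lemma \ref{gen_pos} is handled symmetrically.

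The main obstacle will be justifying this canonical choice of strips: specifically, that maximality of $a$ prevents degenerate limiting behavior such as two adjacent strips collapsing together, which would cause $p_\nu$ to fail to converge in $C_n(a)$. The hope is that if such a collapse occurred, then eventually $c_\nu$ would admit a strictly finer decomposition into more than $a$ strips, contradicting the maximality of $a$.
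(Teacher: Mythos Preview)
Your approach is quite different from the paper's and, as written, has genuine gaps. The paper does not induct on $j$ or work inside $(C_n\otimes C_m)(j)$ at all. Instead it lifts the problem to the free operad $\mathbf{F}$ on the underlying symmetric collection $C_n\amalg C_m$: the canonical surjection $\pi\colon\mathbf{F}(j)\twoheadrightarrow(C_n\otimes C_m)(j)$ gives $(\mu')^{-1}(K)=\pi\bigl(\nu^{-1}(K)\bigr)$ for $\nu=\mu'\circ\pi$, so it suffices that $\nu$ be proper. Now $\mathbf{F}(j)$ is simply a disjoint union of finite products of spaces $C_n(k)$ and $C_m(\ell)$, hence metric, and a sequence in either $\mathbf{F}(j)$ or $(C_n\vert C_m)(j)$ escapes to infinity precisely when the volume of one of its little cubes tends to~$0$; this makes properness of $\nu$ immediate. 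The whole point of the free-operad trick is to avoid ever touching the quotient topology on $(C_n\otimes C_m)(j)$.

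That quotient topology is exactly where your argument is exposed. First, you verify compactness of $(\mu')^{-1}(K)$ by extracting convergent subsequences, but $(C_n\otimes C_m)(j)$ is only defined as a quotient and you give no reason why sequential compactness should imply compactness there. Second, to apply the inductive hypothesis you need the rescaled images $\mu'(t_i^\nu)$ to lie in a \emph{compact} subset of $(C_n\vert C_m)(j_i)$; this requires the limit of $\mu'(t_i^\nu)$ in $C_{n+m}(j_i)$ to again lie in $(C_n\vert C_m)(j_i)$, a closedness statement you neither prove nor cite. Even the step of replacing $p_\nu$ by the ``thinnest'' strips while keeping $c_\nu$ unchanged in $C_n\otimes C_m$ implicitly uses that one can factor an arity-one $n$-cube out of each $t_i^\nu$ inside the tensor product, which needs justification. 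As for the obstacle you flag --- strips collapsing together --- that one is actually harmless: a limit of rectangles with pairwise disjoint interiors again has pairwise disjoint interiors, and no strip can degenerate since the cubes of the limit $x\in C_{n+m}(j)$ have positive volume. Maximality of $a$ plays no role. But the other gaps are real, and the paper's lifting argument bypasses all of them in one stroke.
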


\begin{proof}
Observe that if we have a surjective continuous map $\pi\colon \mathbf{F}(j) \rightarrow (C_n \otimes C_m)(j)$ then the properness of $\nu=\mu' \circ \pi$ implies the properness of $\mu'$ since then the equality $(\mu')^{-1}(K) = \pi \left( \nu^{-1}(K) \right)$ holds for any (compact) subset $K \subseteq (C_n \vert C_m)(j)$. In our case, we will consider the operad $\mathbf{F}$ obtained from the operad coproduct $C_n \amalg C_m$ by imposing the interchange relation between unary operations and operations of arbitrary arity, and we set $\mathbf{F}(j)$ to be its space of arity $j$ operations. This leads to the commutative diagram
$$\begin{tikzcd}
\mathbf{F}(j) \arrow[r, two heads, "\pi"] \arrow[rd, "\nu"'] & (C_n \otimes C_m)(j) \arrow[d, "\mu'~"] \\
                                                      & (C_n \vert C_m)(j),                     
\end{tikzcd}$$
where $\pi$ is the evident quotient map that imposes the remaining relations. 

Recall that a sequence $\{x_i : i \geq 1\}$ in a topological space
$X$ \textit{escapes to infinity} if, for every compact set $K \subseteq X$, the set $\{i \in \mathbb{N} : x_i \in K \}$ is finite. Moreover, for a continuous map between metric spaces $f \colon X \to Y$, the properness of $f$ is equivalent to showing that $f$ preserves all sequences that escape to infinity. With this in mind, we claim first that $\mathbf{F}(j)$ is a metric space: the operadic composition of $C_n$ and $C_m$ together with the specific form of the interchange relation we have imposed for $\mathbf{F}$ ensure that this space is a coproduct over the set $\mathbf{Tree}(j)$ of finite rooted trees with $j$ leaves and at most $j$ vertices with 2 incident edges, with each component being a product of spaces of the form $C_n(k)$ and $C_m(\ell)$, which are metric spaces. As for $(C_n \vert C_m)(j)$, this is a subspace of $C_{n+m}(j)$ and therefore is also metrizable.

 Let $\{ x_i : i \geq 1 \}$ be a sequence in $\mathbf{F}(j)$ that escapes to infinity. As $\mathbf{Tree}(j)$ is a \textit{finite} set, we can suppose that this sequence lies completely in one of the summands $T$ of the coproduct, and therefore consider instead the map
 $$ \nu_T : \prod_{s=1}^N C_{n_s}(a_s) \longrightarrow C_{n+m}(j)$$
 for certain integers $N$ and $a_s$, and $n_s$ being either $n$ or $m$. From this reduction, we see that there must exist some index $1 \leq s \leq N$ such that the projection of the sequence $\{ x_i \}$ to the component $C_{n_s}(a_s)$ defines a sequence that escapes to infinity. Since the sequences in $C_{n_s}(a_s)$ which escape to infinity are exactly the ones where the volume of one of the constituent cubes tends to 0, we conclude that one of the cubes in $\{ \nu_T(x_i) \}$ must have its volume also tending to 0, as required.
\end{proof}

 \begin{proof}[Proof of Theorem \ref{add}]
By diagram \eqref{main_diag}, it suffices to show that $\mu'\colon (C_n \otimes C_m)(j) \to (C_n \vert C_m)(j)$ is a homeomorphism for all arities $j \geq 0$. For $j=0$ this is clear since the operads in question have a single nullary operation. As for $j = 1$, the identification $(C_n \otimes C_m)(1)$ with $C_n(1) \times C_m(1)$ is due to the Boardman--Vogt product of categories coinciding with the cartesian product, and for $C_{n+m}(1)$ we use that any cube in $[0,1]^{n+m}$ is completely described by the cubes obtained by projecting to the first $n$ coordinates, and the last $m$ coordinates. Therefore, we may assume that $j \geq 2$.

From Lemma \ref{homeo} we get that $\mu'$ is a continuous bijection so we only have to check that $\mu'$ is closed. This follows from Lemma \ref{proper}, together with the fact that a proper continuous map $f\colon X \to Y$ with a locally compact and Hausdorff target is a closed map (with compact fibers).
\end{proof}

\section{Concluding remarks}

In this final section we make several final observations on a few aspects of the different proofs of Dunn additivity we previously mentioned, and on how one can extend our proof to accommodate for additivity of an arbitrary finite tensor product of little cubes operads. 

\begin{remark}
    We will begin with some comments on the existing literature concerning Dunn additivity.

\begin{enumerate}[leftmargin=0cm, itemindent =1cm]
    \item[(a)]  Dunn's definition of $D_k$ doesn't define an operad, since the concept of a decomposable little $k$-cube is not invariant under the action by the symmetric groups. However, this can be fixed by redefining a decomposable little cube to be a configuration of cubes $c \in C_k(j)$ such that there exists $\sigma \in \Sigma_j$ for which $\sigma^* c$ is a decomposable little cube in Dunn's sense (here $\sigma^* c$ denotes the action of $\sigma$ on $c$). Moreover, this small correction doesn't change any of the results in \cite{Dunn, Brinkmeier} since all the operads in question are $\Sigma$-free, that is, the symmetric groups act freely on the spaces of operations.
    \item[(b)] A fundamental point of the proof by Brinkmeier is not clear to us. More explicitly, to show that $\mu'$ admits a continuous inverse, he maps $C_n$ into an operad $\overline{C_n}$ where its spaces of operations, as well as the spaces of operations of $\overline{C_n} \otimes \overline{C_m}$, are compact and Hausdorff, and he further extends $\mu'$ to these new compactified versions of $C_n$. However, it is not clear to us how Brinkmeier's definition of $\overline{C_n}$ leads to an operad, nor how to then define $\overline{C_n} \otimes \overline{C_m}$.
    \item[(c)] The Boardman--Vogt tensor product of operads is easy to define, and is useful in describing mutually commuting algebraic structures on spaces. Its scope is limited, however, by the fact that it is not homotopy invariant. Lurie in \cite{LurieHA} presents a definition of a homotopy invariant analogue of the Boardman--Vogt tensor product in the context of $\infty$-operads, and proves a derived version of Dunn additivity for it. 
    \item[(d)] The results by Boavida de Brito and Weiss in \cite{BoavidaWeissProduct} on the configuration category of a product of manifolds provide a different proof for the additivity via the theory of complete Segal spaces. For more details, consult Section 6 of this reference.
    \item[(e)] By the monoidal equivalence proved in \cite{HinnichMoerdijk} between the Lurie model for $\infty$-operads and the dendroidal one, there is also an additivity result for the dendroidal version of the little cubes operads. It remains an open problem to provide a more direct proof of the latter additivity result.
    
\end{enumerate}
\label{history}
\end{remark}

\begin{remark}
One can consider the more general case of additivity for the tensor product of a finite number of operads of the form $C_n$. More precisely, in an analogous way to what we did at the beginning, we can construct an operad map
$$ \mu\colon C_{k_1} \otimes C_{k_2} \otimes \cdots \otimes C_{k_n} \longrightarrow C_{k_1+k_2+\cdots+k_n}$$
and ask whether $\mu$ is a  weak equivalence $C_{k_1} \otimes C_{k_2} \otimes \cdots \otimes C_{k_n} \simeq C_{k_1+k_2+\cdots+k_n}.$ As we have previously mentioned, this doesn't follow immediately from the case $n=2$ since the Boardman--Vogt tensor product in general won't preserve weak equivalences. In spite of this, we claim that the arguments explained above can be carried over to this generalized case where $n \geq 2$.

Indeed, one can upgrade the construction of the operads $C_n \vert C_m$ and $D_n \vert D_m$ to a multivariable version $C_{k_1} \vert \cdots \vert C_{k_n} $ and $D_{k_1} \vert \cdots \vert D_{k_n}$, see \cite{Brinkmeier}, to get a commutative diagram similar to \eqref{main_diag},
\begin{equation*}
\begin{tikzcd}
C_{k_1} \otimes \cdots \otimes C_{k_n} \arrow[rr, "\mu"] \arrow[d, "\mu'"'] &                                                                    & C_{k_1+\cdots+k_n}                            \\
C_{k_1} \vert \cdots \vert C_{k_n}                                               & D_{k_1} \vert \cdots \vert D_{k_n} \arrow[l, "\simeq"', hook'] \arrow[r, -, double equal sign distance] & D_{k_1+\cdots+k_n}. \arrow[u, "\simeq"', hook]
\end{tikzcd}
\end{equation*}

Therefore, it again suffices to show that $\mu'$ is an isomorphism. Lemma \ref{gen_pos} and Lemma \ref{proper} can be easily generalized to this new context. For the proof of Lemma \ref{homeo}, checking the injectivity of $\mu'$ involves studying an equation of the form $\mu'(c_1)=\mu'(c_2)$, where $c_1$ and $c_2$ are configurations which can be written as
$$c_1 = (1 \otimes \cdots \otimes p \otimes \cdots \otimes 1) \circ \mathbf{t} \hspace{1em} \mathrm{and} \hspace{1em} c_1 = (1 \otimes \cdots \otimes q \otimes \cdots \otimes 1) \circ \mathbf{s}$$
by the generalization of Lemma \ref{gen_pos}. Here $p$ and $q$ are, respectively, operations in the factors $C_{k_i}$ and $C_{k_j}$ of the tensor product, with $i$ and $j$ not necessarily equal. One can then proceed as before, by an argument only involving the $i^{th}$ and $j^{th}$ tensor factors.
\label{multi}
\end{remark}

\begin{remark}
Since the little 1-cubes operad $C_1$ coincides with its suboperad of decomposable cubes $D_1$, the isomorphism of operads 
$$\mu'\colon D_1^n = C_1^{\otimes n} \longrightarrow C_1 \vert \cdots \vert C_1 = D_n$$ 
that we obtained from the considerations in Remark \ref{multi} shows that the collection of decomposable little $n$-cubes operads $\left\{ D_n : n \geq 1 \right\}$ satisfies the stronger form of additivity $D_n \otimes D_m \cong D_{n+m}$. Notice that this statement also follows from Dunn's result (in contrast to the additivity of $C_n$) since the Boardman--Vogt tensor product will preserve isomorphisms of operads.
\end{remark}

\bibliographystyle{alpha}
\bibliography{bibliography.bib}

\begin{thebibliography}{dBW18}

\bibitem[Bri00]{Brinkmeier}
M.~Brinkmeier.
\newblock {\em On Operads}.
\newblock PhD thesis, Universitat Osnabrück, available at \url{https://osnadocs.ub.uni-osnabrueck.de/handle/urn:nbn:de:gbv:700-2001051822}, 2000.

\bibitem[dBW18]{BoavidaWeissProduct}
P.~Boavida de~Brito and M.~Weiss.
\newblock \textit{The configuration category of a product}.
\newblock {\em Proceedings of the American Mathematical Society}, \textbf{146}:4497--4512, 2018.

\bibitem[Dun88]{Dunn}
G.~Dunn.
\newblock \textit{Tensor product of operads and iterated loop spaces}.
\newblock {\em J. Pure Appl. Algebra}, \textbf{50}:237--258, 1988.

\bibitem[HM22]{HinnichMoerdijk}
V.~Hinich and I.~Moerdijk.
\newblock \textit{On the equivalence of Lurie's $\infty$-operads and dendroidal $\infty$-operads}.
\newblock {\em \textnormal{arXiv:2206.14033}}, 2022.

\bibitem[Lur17]{LurieHA}
J.~Lurie.
\newblock \textit{Higher algebra}, 2017.

\bibitem[Szc23]{Szczesny}
B.~Szczesny.
\newblock {\em On the Additivity of Equivariant Operads}.
\newblock PhD thesis, University of California Los Angeles, available at \url{https://escholarship.org/uc/item/4r40725w}, 2023.

\end{thebibliography}
\end{document}